\newtheorem{thm}{Theorem}
\newtheorem{lem}[thm]{Lemma}
\newcommand{\C}{\mathbb{C}}
\newcommand{\D}{\mathbb{D}}
\begin{document}

\title[Logarithmic convexity of area integral means]
{\bf Logarithmic convexity of area integral means\\
 for analytic functions II}
\thanks{Jie Xiao is supported in part by NSERC of Canada and URP of Memorial University}

\author{Chunjie Wang}
\address{Chunjie Wang, Department of Mathematics, Hebei University of Technology,
Tianjin 300401, China} 
\email{wcj@hebut.edu.cn}

\author{Jie Xiao}
\address{Jie Xiao, Department of Mathematics and Statistics, Memorial University
of Newfoundland, St. John's, NL A1C 5S7, Canada}
\email{jxiao@mun.ca}

\author{Kehe Zhu}
\address{Kehe Zhu, Department of Mathematics and Statistics, State University
of New York, Albany, NY 12222, USA} 
\email{kzhu@math.albany.edu}

\begin{abstract}
For $0<p<\infty$ and $-2\le\alpha\le0$ we show that the $L^p$ integral mean on $r\D$ 
of an analytic function in the unit disk $\D$ with respect to the weighted area measure 
$(1-|z|^2)^\alpha\,dA(z)$ is a logarithmically convex function of $r$ on $(0,1)$.
\end{abstract}

\keywords{logarithmic convexity, area integral means, Hardy space, Bergman space}

\subjclass[2010]{Primary 30H10, 30H20}

\maketitle
%\date{today}

\section{Introduction}

Let $\D$ denote the unit disk in the complex plane $\C$ and let
$H(\D)$ denote the space of all analytic functions in $\D$. For
any $f\in H(\D)$ and $0<p<\infty$ the classical integral means of
$f$ are defined by
$$M_p(f,r)=\frac1{2\pi}\int_0^{2\pi}|f(re^{i\theta})|^p\,d\theta,\quad0\le r<1.$$
The well-known Hardy convexity theorem asserts that $M_p(f,r)$, 
as a function of $r$ on $[0,1)$, is non-decreasing and logarithmically convex. 
Recall that the logarithmic convexity of $g(r)$ simply means that $\log g(r)$ is a
convex function $\log r$. The case $p=\infty$ corresponds to the Hadamard Three-Circles 
Theorem. See \cite[Theorem 1.5]{D} for example.

In this paper we will consider integral means of analytic functions in the unit disk 
with respect to weighted area measures. Thus for any real number $\alpha$ we consider the measure
$$dA_\alpha(z)=(1-|z|^2)^\alpha\,dA(z),$$
where $dA$ is area measure on $\D$. For any $f\in H(\D)$ and
$0<p<\infty$ we define
$$M_{p,\alpha}(f,r)=\frac{\displaystyle\int_{r\D}|f(z)|^p\,dA_\alpha(z)}
{\displaystyle\int_{r\D}\,dA_\alpha(z)},\quad 0<r<1,$$ 
and call them area integral means of $f$.

The study of area integral means of analytic functions began in \cite{XZ}, where it was shown
that for $\alpha\le-1$, the function $M_{p,\alpha}(f,r)$ is bounded in $r$ if and only if $f$
belongs to the Hardy space $H^p$, and for $\alpha>-1$, $M_{p,\alpha}(f,r)$ is bounded in $r$
if and only if $f$ belongs to the weighted Bergman space $A^p_\alpha$. See \cite{D} and \cite{HKZ}
for the theories of Hardy and Bergman spaces respectively.

It was also shown in \cite{XZ} that each function $r\mapsto M_{p,\alpha}(f,r)$ is strictly 
increasing unless $f$ is constant. Furthermore, for $p\ge1$ and $\alpha\in\{-1,0\}$, the function
$\log M_{p,\alpha}(f,r)$ is convex in $\log r$. However, an example in \cite{XZ} shows that 
$\log M_{2,1}(z,r)$ is concave in $\log r$. Consequently, the following conjecture was made in
\cite{XZ}: the function $\log M_{p,\alpha}(f,r)$ is convex in $\log r$ when $\alpha\leq0$ and it
is concave in $\log r$ when $\alpha>0$. 

It turned out that the logarithmic convexity of $M_{p,\alpha}(f,r)$ is much more complicated 
than was conjectured in \cite{XZ}. Somewhat surprisingly, the problem is highly nontrivial even
in the Hilbert space case $p=2$. More specifically, it was proved in \cite{WZ} that for $p=2$
and any $f\in H(\D)$ the function $M_{2,\alpha}(f,r)$ is logarithmically convex when $-3\le
\alpha\le0$, and this range for $\alpha$ is best possible. It was also proved in \cite{WZ} that
when $p\not=2$ and $f$ is a monomial, the function $M_{p,\alpha}(z^k,r)$ is logarithmically
convex for $-2\le\alpha\le0$.

Area integral means of analytic functions are also studied in \cite{WX} and \cite{XX}.

The main result of this paper is the following.

\begin{thm}
Suppose $0<p<\infty$, $-2\le\alpha\le0$, and $f$ is analytic in $\D$. Then the function
$M_{p,\alpha}(f,r)$ is logarithmically convex.
\label{1}
\end{thm}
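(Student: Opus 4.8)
The plan is to pass to polar coordinates, reduce the statement to the logarithmic convexity of a one-variable weighted running average, and then bootstrap from the monomial case already established in \cite{WZ}. Writing $z=\rho e^{i\theta}$, integrating in $\theta$, and substituting $t=\rho^2$, one obtains
$$M_{p,\alpha}(f,r)=\frac{\mathcal P(s)}{\mathcal Q(s)},\qquad s=r^2,\qquad
\mathcal P(s)=\int_0^s\psi(t)(1-t)^\alpha\,dt,\quad \mathcal Q(s)=\int_0^s(1-t)^\alpha\,dt,$$
where $\psi(t)=M_p(f,\sqrt t)$. Since $\log s=2\log r$, it suffices to show that $\log(\mathcal P/\mathcal Q)$ is convex in $\log s$. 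If $f$ is constant this is trivial; otherwise Hardy's convexity theorem tells us that $\psi$ is positive, strictly increasing, and that $\log\psi$ is convex in $\log t$. It will be enough to treat the case where, in addition, $\psi$ is smooth, strictly log-convex in $\log t$, and satisfies $\psi'>0$; the general case then follows by approximating $\psi$ by such functions and using that logarithmic convexity passes to locally uniform limits.

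First I would rewrite the goal as a pointwise differential inequality. Put $\mathcal R(s)=\psi(s)\mathcal Q(s)-\mathcal P(s)>0$ and $x=\log s$. A direct computation gives
$$\frac{d}{dx}\log\frac{\mathcal P(s)}{\mathcal Q(s)}=\frac{s(1-s)^\alpha\,\mathcal R(s)}{\mathcal P(s)\mathcal Q(s)}\ \ge\ 0,$$
and, differentiating once more and using $\mathcal R'(s)=\psi'(s)\mathcal Q(s)$, one checks that $\log(\mathcal P/\mathcal Q)$ is convex in $\log s$ exactly when
$$Z_\psi(s):=\frac1s-\frac{\alpha}{1-s}+\frac{\psi'(s)\mathcal Q(s)}{\mathcal R(s)}-\frac{\psi(s)(1-s)^\alpha}{\mathcal P(s)}-\frac{(1-s)^\alpha}{\mathcal Q(s)}\ \ge\ 0$$
for all $s\in(0,1)$. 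Up to this point $\alpha\le0$ is used only to keep $-\alpha/(1-s)\ge0$; the real task is to dominate the two subtracted terms.

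The heart of the matter is a comparison, at each fixed radius, with a monomial. Fix $s_0\in(0,1)$, let $\beta_0=s_0\psi'(s_0)/\psi(s_0)>0$, and let $\psi_0(t)=\psi(s_0)(t/s_0)^{\beta_0}$ be the monomial supporting $\psi$ at $s_0$: convexity of $\log\psi$ in $\log t$ forces $\psi_0\le\psi$ on $(0,1)$, while $\psi_0(s_0)=\psi(s_0)$ and $\psi_0'(s_0)=\psi'(s_0)$. Let $\mathcal P_0,\mathcal R_0$ denote the quantities built from $\psi_0$, with the same $\mathcal Q$. Then $\mathcal P(s_0)\ge\mathcal P_0(s_0)$, hence $0<\mathcal R(s_0)\le\mathcal R_0(s_0)$, and since the $\psi$-dependent terms of $Z$ can now be compared one at a time — using $\psi(s_0)=\psi_0(s_0)$, $\psi'(s_0)=\psi_0'(s_0)$, $\mathcal P(s_0)\ge\mathcal P_0(s_0)$, $\mathcal R(s_0)\le\mathcal R_0(s_0)$ — this yields $Z_\psi(s_0)\ge Z_{\psi_0}(s_0)$. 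But $\psi_0(t)$ is a positive constant times $t^{\beta_0}=M_{2\beta_0}(z,\sqrt t)$, so $\mathcal P_0(s)/\mathcal Q(s)$ is a positive constant times $M_{2\beta_0,\alpha}(z,\sqrt s)$, which is logarithmically convex by the monomial case of \cite{WZ} (valid precisely for $-2\le\alpha\le0$); this is exactly the statement $Z_{\psi_0}(s_0)\ge0$. Therefore $Z_\psi(s_0)\ge0$ for every $s_0$, and Theorem~\ref{1} follows.

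I expect the genuinely delicate step to be the term-by-term comparison $Z_\psi(s_0)\ge Z_{\psi_0}(s_0)$: each of the $\psi$-dependent pieces must be estimated in the right direction, which is why $\psi_0$ is chosen to touch $\psi$ from below and to first order at $s_0$. The other indispensable ingredient is the monomial case of \cite{WZ}, on which the whole scheme rests — and with it the sharp restriction $-2\le\alpha\le0$ (as opposed to the broader $\alpha\le0$ conjectured in \cite{XZ}); the reduction to smooth $\psi$ is a routine regularization.
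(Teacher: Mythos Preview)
Your argument is correct and substantially different from the paper's. The paper fixes $x$ and carries out a direct brute-force verification that the differential quantity $\Delta(x)=D(h)-D(\varphi)$ is nonnegative: after introducing auxiliary functions $A,B,C$ and then $E,F,S$, it reduces to showing $ES+F\ge0$, which is checked (with Maple-assisted algebra) via a case split on $y=x(1-x)M'/M$ against a threshold $y_0$. Your approach instead recognises that the second logarithmic derivative can be written as $sG'(s)$ with $G>0$, so that convexity is equivalent to $Z_\psi=(\log G)'\ge0$; the key insight is that each $\psi$-dependent term in $Z_\psi$ is monotone in $\mathcal P$ or $\mathcal R$ in the right direction, so replacing $\psi$ by its log-log tangent monomial $\psi_0$ at the point in question only decreases $Z$, and the monomial inequality $Z_{\psi_0}\ge0$ is exactly the case treated in \cite{WZ}. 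Your route is shorter and conceptually cleaner, and it makes transparent \emph{why} the range $-2\le\alpha\le0$ appears: it is precisely the range on which the monomial (power-function) case holds. The paper's approach, by contrast, is self-contained---it does not invoke \cite{WZ}---and in the process yields the slightly more general Theorem~\ref{6}, which your argument also proves. The smoothness/regularisation caveat you flag is a technicality shared by the paper's proof (which differentiates $M$ twice), so it is not a gap relative to the original.
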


We have been unable to determine whether or not the range $\alpha\in[-2,0]$ 
is best possible. In other words, we do not know if there exists a set $\Omega$ properly 
containing $[-2,0]$ such that $M_{p,\alpha}(f,r)$ is logarithmically convex on $(0,1)$ for 
all $p\in(0,\infty)$, all $\alpha\in\Omega$, and all $f\in H(\D)$. It is certainly 
reasonable to expect that the logarithmic convexity of $M_{p,\alpha}(f,r)$ for all $f$ 
will depend on both $p$ and $\alpha$. The ultimate problem is to find out the precise 
dependence.

\section{Preliminaries}

The proof of Theorem~\ref{1} is ``elementary'' but very laborious. It requires several
preliminary results that we collect in this section. Throughout the paper we use the
symbol $\equiv$ whenever a new notation is being introduced.

The next lemma was stated in \cite{WZ} without proof. We provide a proof here for the sake
of completeness.

\begin{lem}
Suppose $f$ is positive and twice differentiable on $(0,1)$. Then
\begin{enumerate}
\item[(i)] $f(x)$ is convex in $\log x$ if and only if 
$$f'(x)+xf''(x)\ge0$$
for all $x\in(0,1)$.
\item[(ii)] $f(x)$ is convex in $\log x$ if and only if $f(x^2)$ is convex in $\log x$.
\item[(iii)] $\log f(x)$ is convex in $\log x$ if and only if
$$D(f(x))\equiv\frac{f'(x)}{f(x)}
+x\frac{f''(x)}{f(x)}-x\left(\frac{f'(x)}{f(x)}\right)^2\ge 0$$
for all $x\in(0,1)$.
\end{enumerate}
\label{2}
\end{lem}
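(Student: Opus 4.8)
The plan is to treat the three statements as a sequence, since (iii) is the one we actually need and it reduces naturally to (i). For part (i), I would introduce the substitution $t=\log x$, so that $x$ ranges over $(-\infty,0)$, and set $g(t)=f(e^t)$. By the standard second-derivative characterization of convexity for twice-differentiable functions, $f$ is convex in $\log x$ precisely when $g''(t)\ge0$ for all $t<0$. Computing, $g'(t)=e^tf'(e^t)=xf'(x)$ and $g''(t)=e^tf'(e^t)+e^{2t}f''(e^t)=xf'(x)+x^2f''(x)=x\bigl(f'(x)+xf''(x)\bigr)$. Since $x>0$ on $(0,1)$, the sign of $g''(t)$ is exactly the sign of $f'(x)+xf''(x)$, which gives (i).

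For part (ii), I would again pass to the logarithmic variable: $f(x)$ is convex in $\log x$ iff $t\mapsto f(e^t)$ is convex on $(-\infty,0)$, while $f(x^2)$ is convex in $\log x$ iff $t\mapsto f(e^{2t})$ is convex on $(-\infty,0)$. But $t\mapsto f(e^{2t})$ is the composition of $s\mapsto f(e^s)$ with the affine map $t\mapsto 2t$, and affine reparametrization preserves convexity in both directions; moreover $s=2t$ ranges over all of $(-\infty,0)$ as $t$ does, so no domain is lost. Hence the two convexity statements are equivalent. (Alternatively one can verify it directly from (i): replacing $f(x)$ by $h(x)=f(x^2)$ one computes $h'(x)+xh''(x)=2\bigl(3x^2f'(x^2)+2x^4f''(x^2)\bigr)$, but the change-of-variable argument is cleaner.)

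For part (iii), the key observation is that $\log f(x)$ is convex in $\log x$ if and only if the function $F(x)\equiv\log f(x)$ is convex in $\log x$, and since $F$ is twice differentiable (as $f>0$), part (i) applies to $F$: the condition is $F'(x)+xF''(x)\ge0$ for all $x\in(0,1)$. Now I would simply compute $F'(x)=f'(x)/f(x)$ and $F''(x)=f''(x)/f(x)-\bigl(f'(x)/f(x)\bigr)^2$, so that
$$F'(x)+xF''(x)=\frac{f'(x)}{f(x)}+x\frac{f''(x)}{f(x)}-x\left(\frac{f'(x)}{f(x)}\right)^2=D(f(x)),$$
and the inequality $D(f(x))\ge0$ is precisely the stated criterion. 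This is entirely mechanical; there is essentially no obstacle here, since part (i) does all the work and the rest is the chain rule. The only point requiring a word of care is that in (i) one should note $f$ convex in $\log x$ means the map sending the real variable $u=\log x$ to $f(e^u)$ is convex, and that this map is defined and twice differentiable on the open interval $(-\infty,0)$, so the textbook criterion "second derivative $\ge 0$ on an open interval $\iff$ convex" applies verbatim.
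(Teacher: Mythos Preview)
Your proof is correct and for parts (i) and (iii) it coincides exactly with the paper's argument: the same substitution $t=\log x$, the same chain-rule computation, and the same reduction of (iii) to (i) applied to $\log f$.

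For part (ii) you take a slightly different route. The paper verifies (ii) by direct computation from (i): with $g(x)=f(x^2)$ one has $g'(x)+xg''(x)=4x\bigl[f'(x^2)+x^2f''(x^2)\bigr]$, so the criterion from (i) for $g$ is a positive multiple of the criterion for $f$ evaluated at $x^2$. Your affine-reparametrization argument (composing $s\mapsto f(e^s)$ with $t\mapsto 2t$) is cleaner and more conceptual, and it avoids any computation at all; the paper's version has the minor advantage of staying entirely within the (i)-framework just established.

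One small correction: your parenthetical alternative computation is wrong. With $h(x)=f(x^2)$ one gets $h'(x)=2xf'(x^2)$ and $h''(x)=2f'(x^2)+4x^2f''(x^2)$, hence
\[
h'(x)+xh''(x)=4x\bigl[f'(x^2)+x^2f''(x^2)\bigr],
\]
not $2\bigl(3x^2f'(x^2)+2x^4f''(x^2)\bigr)$. This does not affect your actual argument, since you explicitly opt for the reparametrization approach instead.
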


\begin{proof}
Let $t=\log x$. Then $y=f(x)=f(e^t)$. The convexity of $y$ in $t$ is equivalent to 
$d^2y/dt^2\ge0$. Since
$$\frac{dy}{dt}=f'(e^t)e^t,$$
and
$$\frac{d^2y}{dt^2}=f''(e^t)e^{2t}+f'(e^t)e^t=x(xf''(x)+f'(x)),$$
we obtain the conclusion in part (i).

If $g(x)=f(x^2)$, then it is easy to check that
$$g'(x)+xg''(x)=4x[f'(x^2)+x^2f''(x^2)].$$
So part (ii) follows from part (i).

Similarly, part (iii) follows if we apply part (i) to the function $h(x)=\log f(x)$.
\end{proof}

Recall that $M_{p,\alpha}(f,r)$ is a quotient of two positive functions.
It is thus natural that we will need the following result.

\begin{lem}
Suppose $f=f_1/f_2$ is a quotient of two positive and twice differentiable functions 
on $(0,1)$. Then
\begin{equation}
D(f(x))=D(f_1(x))-D(f_2(x))
\label{eq1}
\end{equation}
for $x\in(0,1)$. Consequently, $\log f(x)$ is convex in $\log x$ if and only if
\begin{equation}
D(f_1(x))-D(f_2(x))\ge 0
\label{eq2}
\end{equation}
on $(0,1)$.
\label{3}
\end{lem}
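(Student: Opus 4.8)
The plan is to recognize the quantity $D(g)$ as a \emph{linear} differential operator applied to $\log g$, which makes both the identity \eqref{eq1} and the ``consequently'' clause essentially immediate. First I would record the reformulation: for any positive, twice differentiable $g$ on $(0,1)$, writing $h=\log g$ we have $h'=g'/g$ and $h''=g''/g-(g'/g)^2$, so that
$$D(g(x))=\frac{g'}{g}+x\left(\frac{g''}{g}-\left(\frac{g'}{g}\right)^2\right)=h'(x)+xh''(x);$$
in other words $D(g)=L(\log g)$, where $L$ is the linear operator $L(h)\equiv h'+xh''$ that already appears in Lemma~\ref{2}.

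Next, since $f=f_1/f_2$ with $f_1$ and $f_2$ positive and twice differentiable, the quotient $f$ is again positive and twice differentiable on $(0,1)$, and $\log f=\log f_1-\log f_2$. Applying the reformulation above together with the linearity of $L$,
$$D(f)=L(\log f)=L(\log f_1)-L(\log f_2)=D(f_1)-D(f_2),$$
which is precisely \eqref{eq1}. (One could instead verify \eqref{eq1} by brute force, substituting $f'=(f_1'f_2-f_1f_2')/f_2^2$ and the corresponding second-derivative expression into the definition of $D$ and simplifying, but this is laborious; the logarithmic reformulation makes the cancellation transparent.)

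Finally, for the ``consequently'' clause I would invoke Lemma~\ref{2}(iii) for the positive, twice differentiable function $f$: the assertion that $\log f(x)$ is convex in $\log x$ is equivalent to $D(f(x))\ge0$ on $(0,1)$, and by \eqref{eq1} this is exactly \eqref{eq2}. I do not expect any real obstacle in this lemma; the one point worth noticing is that $D$ factors through the logarithm, after which the result is a one-line consequence of the linearity of $h\mapsto h'+xh''$ and of Lemma~\ref{2}.
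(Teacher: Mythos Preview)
Your proposal is correct and follows essentially the same approach as the paper: both recognize that $D(g)=(x(\log g)')'=(\log g)'+x(\log g)''$, so that $D$ factors linearly through the logarithm, and then both conclude immediately from $\log f=\log f_1-\log f_2$ together with Lemma~\ref{2}(iii). The only cosmetic difference is that the paper writes this via the compact identity $D(f)=(x(\log f)')'$ rather than explicitly naming the operator $L(h)=h'+xh''$, but the content is identical.
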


\begin{proof} 
Observe that
$$D(f(x))=\left(\frac{xf'(x)}{f(x)}\right)'=\left(x(\log f(x))'\right)'.$$
Since $\log f=\log f_1-\log f_2$, we obtain the identity in (\ref{eq1}). By 
part (iii) of Lemma~\ref{2}, $\log f(x)$ is convex in $\log x$ if and only if
inequality (\ref{eq2}) holds.
\end{proof}

To simplify notation, we are going to write 
$$x=r^2,\qquad M(r)=M_p(f,\sqrt{r}).$$
Without loss of generality, we assume throughout the paper that $f$ is not
a constant, so that $M$ and $M'$ are always positive.

We also write
$$h=h(x)=\int_0^r M_p(f,t)(1-t^2)^\alpha\,2tdt=\int_0^xM(t)(1-t)^\alpha\,dt,$$
and
$$\varphi=\varphi(x)=\int_0^r(1-t^2)^\alpha\,2tdt=\int_0^x(1-t)^\alpha\,dt.$$
By part (ii) of Lemma~\ref{2}, the logarithmic convexity of $M_{p,\alpha}(f,r)$ 
on $(0,1)$ is equivalent to the logarithmic convexity of $h(x)/\varphi(x)$ on 
$(0,1)$. According to Lemma~\ref{3}, this will be accomplished if we can show 
that the difference
\begin{equation}
\Delta(x)\equiv D(h(x))-D(\varphi(x))
\label{eq3}
\end{equation}
is nonnegative on $(0,1)$. This will be done in the next section.

W will need several preliminary estimates on the functions $h$ and $\varphi$. The 
next lemma shows where and why we need the assumption $-2\le\alpha\le0$.

\begin{lem}
Suppose $-2\le\alpha\le 0$ and $x\in[0,1)$. Then
\begin{enumerate}
\item[(a)] $1-(\alpha+1)\varphi(x)-(1-x)\varphi'(x)=0$.
\item[(b)] $\varphi(x)-x\ge0$.
\item[(c)] $g_1(x)\equiv x(1-x-\alpha x)-(1-x)\varphi(x)\ge0$.
\item[(d)] $g_2(x)\equiv (\alpha+2)\varphi^2(x)-2(1+x+\alpha x)\varphi(x)+2x\ge0$.
\item[(e)] $g_3(x)\equiv\varphi^2(x)-(1+x+\alpha x)\varphi(x)+x\ge0$.
\end{enumerate}
\label{4}
\end{lem}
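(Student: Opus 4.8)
The natural starting point is part~(a), which should follow by a direct computation from the explicit formula $\varphi(x)=\int_0^x(1-t)^\alpha\,dt$. Differentiating gives $\varphi'(x)=(1-x)^\alpha$, so $(1-x)\varphi'(x)=(1-x)^{\alpha+1}$. When $\alpha\neq-1$ one has the closed form $\varphi(x)=\bigl(1-(1-x)^{\alpha+1}\bigr)/(\alpha+1)$, whence $(\alpha+1)\varphi(x)=1-(1-x)^{\alpha+1}$, and the identity in~(a) is immediate; the case $\alpha=-1$ follows by continuity (or directly, since then $\varphi(x)=-\log(1-x)$ and the claimed identity reduces to $1+\log(1-x)+(1-x)\cdot(1-x)^{-1}\cdot\log(1-x)$... so one instead checks it limitingly). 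I would phrase~(a) uniformly by noting $\frac{d}{dx}\bigl[(\alpha+1)\varphi(x)+(1-x)\varphi'(x)\bigr]=(\alpha+1)(1-x)^\alpha-(1-x)^\alpha-\alpha(1-x)^{\alpha}\cdot\frac{... }{}$, i.e. show the bracketed quantity $1-(\alpha+1)\varphi-(1-x)\varphi'$ has zero derivative and vanishes at $x=0$.

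For parts~(b)--(e), the unifying idea is the same: each $g_i$ (and the function $\varphi(x)-x$) vanishes at $x=0$, so it suffices to control its derivative, and part~(a) is the tool that makes the derivatives collapse. For~(b), $(\varphi-x)'=(1-x)^\alpha-1\ge0$ since $\alpha\le0$ and $1-x\in(0,1]$, so $\varphi(x)\ge x$. For~(c), differentiate $g_1$, substitute $\varphi'=(1-x)^\alpha$, and then use~(a) in the form $(1-x)\varphi'=1-(\alpha+1)\varphi$ to eliminate the awkward $(1-x)^\alpha$ term; I expect $g_1'$ to reduce to an expression that is manifestly nonnegative given~(b) and the sign constraints $-2\le\alpha\le0$, or else to one whose own sign is controlled by another induction-from-zero argument. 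Parts~(d) and~(e) are structurally similar quadratics in $\varphi$; note $g_2$ and $g_3$ share the combination $(1+x+\alpha x)\varphi-x$, and in fact $g_2=(\alpha+2)\varphi^2-2\bigl[(1+x+\alpha x)\varphi-x\bigr]$ while $g_3=\varphi^2-\bigl[(1+x+\alpha x)\varphi-x\bigr]$, so once one is handled the other should follow by a comparison (using $\alpha+2\ge0$ and $0\le\varphi\le 1/(\alpha+1)$ when $\alpha>-1$, with the endpoint cases again by continuity). I would differentiate $g_3$, use~(a) to simplify, and reduce to showing a lower-order auxiliary function is nonnegative, bootstrapping from (b) and (c).

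The main obstacle I anticipate is \emph{not} any single differentiation but the bookkeeping of signs across the full range $-2\le\alpha\le0$: the quantity $1+x+\alpha x=1+(1+\alpha)x$ changes behavior as $\alpha$ crosses $-1$, and $(1-x)^\alpha$ blows up near $x=1$ when $\alpha<0$, so the "reduce to derivative, evaluate at $0$" scheme must be applied with care to ensure the auxiliary functions one bootstraps on are themselves nonnegative on all of $[0,1)$ and not merely near an endpoint. I would organize the proof so that (b) feeds (c), and (a)+(b)+(c) together feed (d) and (e), keeping each differentiated expression in a form where every summand's sign is visibly determined by $\alpha\in[-2,0]$, $1-x\in(0,1]$, and the previously established inequalities — resorting to a second layer of "vanishes at $0$, check derivative" only where a summand's sign is not immediately transparent.
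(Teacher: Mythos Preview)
Your plan is correct and matches the paper's approach essentially line for line: each of (b)--(e) vanishes at $x=0$, and differentiating while using (a) to eliminate $(1-x)\varphi'$ reduces $g_1'$, $g_2'$, $g_3'$ to expressions that are nonnegative by (b) and the sign constraints on $\alpha$. One minor simplification over your outline: the paper needs only (b), not (c), as input for (d) and (e), since the derivatives collapse to $g_2'=2(\alpha+2)(\varphi-x)\varphi'$ and $g_3'=(2\varphi-(\alpha+2)x)\varphi'\ge 2(\varphi-x)\varphi'$.
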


\begin{proof} 
If $\alpha\not=-1$, part (a) follows from the facts that
$$\varphi(x)=\frac{1-(1-x)^{\alpha+1}}{\alpha+1},\quad \varphi'(x)=(1-x)^\alpha.$$
If $\alpha=-1$, part (a) follows from the fact that
$$\varphi'(x)=\frac1{1-x}.$$

Part (b) follows from the fact that $(1-t)^\alpha\ge1$ for $\alpha\le0$ and $t\in[0,1)$.

A direct computation shows that
$$g_1'(x)=1-2x-2\alpha x+\varphi(x)-(1-x)\varphi'(x).$$
It follows from part (a) that
$$g_1'(x)=(\alpha+2)(\varphi(x)-x)-\alpha x.$$
By part (b) and the assumption that $-2\le\alpha\le0$, we have $g_1'(x)\ge0$ for
$x\in[0,1)$. Thus $g_1(x)\ge g_1(0)=0$ for all $x\in [0,1)$. This proves (c).

Another computation gives
\begin{eqnarray*}
g_2'(x)&=&2(\alpha+2)\varphi(x)\varphi'(x)\!-\!2(\alpha+1)\varphi(x)\!-\!
2(1+x+\alpha x)\varphi'(x)\!+\!2\\
&=&2(\alpha+2)\varphi(x)\varphi'(x)-2(1+x+\alpha x)\varphi'(x)+2(1-x)^{\alpha+1}\\
&=&2(\alpha+2)(\varphi(x)-x)\varphi'(x).
\end{eqnarray*}
Since 
$$\alpha+2\ge0,\quad \varphi(x)-x\ge0,\quad\varphi'(x)=(1-x)^\alpha\ge0,$$
we have $g_2'(x)\ge0$ for all $x\in[0,1)$. Therefore, $g_2(x)\ge g_2(0)=0$ for 
all $x\in [0,1)$. This proves (d).

A similar computation produces
\begin{eqnarray*}
g_3'(x)&=&2\varphi(x)\varphi'(x)-(\alpha+1)\varphi-(1+x+\alpha x)\varphi'(x)+1\\
&=&2\varphi(x)\varphi'(x)-(1+x+\alpha x)\varphi'(x)+(1-x)^{\alpha+1}\\
&=&(2\varphi(x)-(\alpha+2)x)\varphi'(x)\\
&\ge&2(\varphi(x)-x)\varphi'(x)\ge0,
\end{eqnarray*}
which yields $g_3(x)\ge g_3(0)=0$ for all $x\in [0,1)$. This proves (e) and
completes the proof of the lemma.
\end{proof}

Let us write
\begin{eqnarray*}
&&A=A(x)=\frac{\varphi(x)-x}{\varphi^2(x)},\\
&&B=B(x)=(1-x-\alpha x)+x(1-x)\frac{M'(x)}{M(x)},\\
&&C=C(x)=x(1-x)^{\alpha+1}.
\end{eqnarray*}
By the proof of Lemma~\ref{4}, $A(x)$ is positive on $(0,1)$. $B(x)$ is positive
on $(0,1)$ as $\alpha\le0$ and $M'/M>0$. It is obvious that $C(x)$ is positive 
on $(0,1)$ as well.

\begin{lem}
We have $B^2-4AC>0$ on $(0,1)$.
\label{5}
\end{lem}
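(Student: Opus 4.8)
The plan is to exploit the fact that the only part of $B$ depending on $f$, namely the term $x(1-x)M'(x)/M(x)$, is strictly positive on $(0,1)$, so that it may simply be dropped. Put $B_0=B_0(x)=1-x-\alpha x$. Since $-2\le\alpha\le0$ we have $1+\alpha\in[-1,1]$, so $(1+\alpha)x\in(-1,1)$ for $x\in(0,1)$ and hence $B_0(x)=1-(1+\alpha)x>0$ there; moreover $B(x)-B_0(x)=x(1-x)M'(x)/M(x)>0$ on $(0,1)$ because $x(1-x)>0$ and $M,M'>0$. Thus $B>B_0>0$, so $B^2>B_0^2$ on $(0,1)$, and it is enough to prove the sharper, $f$-independent inequality $B_0^2-4AC\ge0$ there.

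For this I would first rewrite $C$ by means of part (a) of Lemma~\ref{4}. Since $\varphi'(x)=(1-x)^\alpha$, that identity yields $(1-x)^{\alpha+1}=(1-x)\varphi'(x)=1-(\alpha+1)\varphi(x)$, so that
$$C=x(1-x)^{\alpha+1}=x\bigl(1-(\alpha+1)\varphi(x)\bigr).$$
Multiplying $B_0^2-4AC$ by $\varphi^2>0$ and inserting $A=(\varphi-x)/\varphi^2$ gives
$$\varphi^2\bigl(B_0^2-4AC\bigr)=(1-x-\alpha x)^2\varphi^2-4x(\varphi-x)\bigl(1-(\alpha+1)\varphi\bigr).$$
The crucial step is to expand the right-hand side and regroup it in powers of $\varphi$: writing $1-x-\alpha x=1-(1+\alpha)x$ and $1+x+\alpha x=1+(1+\alpha)x$, the coefficient of $\varphi^2$ collapses to $\bigl(1+(1+\alpha)x\bigr)^2$, and the whole expression turns out to be the perfect square
$$\varphi^2\bigl(B_0^2-4AC\bigr)=\bigl((1+x+\alpha x)\varphi(x)-2x\bigr)^2\ge0.$$
Hence $B_0^2-4AC\ge0$ on $(0,1)$.

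Combining the two observations,
$$B^2-4AC=(B^2-B_0^2)+(B_0^2-4AC)>0\qquad\text{on }(0,1),$$
since the first bracket is strictly positive (as $B>B_0\ge0$) and the second is nonnegative. The only genuinely non-mechanical point is recognizing that one should throw away the $M'/M$ term and then noticing that the surviving expression is a perfect square; once this is in hand, verifying the displayed identity is a routine algebraic expansion. Note that parts (b)--(e) of Lemma~\ref{4} are not used in this particular argument; they are needed later for the analysis of $\Delta(x)$.
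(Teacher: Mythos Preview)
Your proof is correct and is essentially the same argument as the paper's: both show that the $f$-independent part $B_0^2-4AC$ is the perfect square $\bigl((1+x+\alpha x)\varphi-2x\bigr)^2/\varphi^2$ (the paper writes it equivalently as $\bigl[(1-x-\alpha x)-2(\varphi-x)/\varphi\bigr]^2$) and that the remaining $M'/M$-terms are strictly positive. The only difference is organizational---you discard the $M'/M$ contribution up front and then identify the square, whereas the paper expands $B^2-4AC$ in full and completes the square---but the content is identical, and indeed your perfect-square form reappears verbatim in the paper's later expression for $S$.
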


\begin{proof}
We have
$$B^2-4AC=\left[(1-x-\alpha x)+x(1-x)\frac{M'}M\right]^2-4x(1-x)^{\alpha+1}
\frac{\varphi-x}{\varphi^2}.$$
It follows from part (a) of Lemma~\ref{4} and the identity 
$\varphi'(x)=(1-x)^\alpha$ that
$$(\alpha+1)x\varphi=x-x(1-x)^{\alpha+1}.$$
Rewrite this as
$$-(1-x-\alpha x)\varphi+\varphi-x=-x(1-x)^{\alpha+1},$$
from which we obtain
$$-4x(1-x)^{\alpha+1}\frac{\varphi-x}{\varphi^2}=-\frac{4(\varphi-x)(1-x-\alpha x)}{\varphi}
+\frac{4(\varphi-x)^2}{\varphi^2}.$$
Combining this with the earlier experession for $B^2-4AC$, we see that $B^2-4AC$ is equal to
the sum of
$$\left[(1-x-\alpha x)-\frac{2(\varphi-x)}{\varphi}\right]^2$$
and
$$x^2(1-x)^2\left(\frac{M'}M\right)^2+2x(1-x)(1-x-\alpha x)\frac{M'}M.$$
The first summand above is always nonnegative, while the second summand is always positive,
because $\alpha\le0$, $M'>0$, and $M>0$. This proves the desired result.
\end{proof}

\section{Proof of Main Result}

This section is devoted to the proof of Theorem~\ref{1}. As was remarked in the
previous section, we just need to show that the difference function $\Delta(x)$
defined in (\ref{eq3}) is always nonnegative on $(0,1)$. Continuing the 
convention in \cite{WZ}, we will also use the notation $A\sim B$ to mean
that $A$ and $B$ have the same sign.

Since
$$\varphi'=(1-x)^\alpha,\quad \varphi''=-\alpha(1-x)^{\alpha-1},$$
we have
\begin{eqnarray*}
D(\varphi(x))&=&\frac{\varphi\varphi'+x\varphi\varphi''-x(\varphi')^2}{\varphi^2}\\
&=&\frac{(1-x)^{\alpha-1}}{\varphi^2}\left[\varphi-x[(\alpha+1)\varphi+
(1-x)^{\alpha+1}]\right].
\end{eqnarray*}
By part (a) of Lemma~\ref{4}, 
$$(\alpha+1)\varphi+(1-x)^{\alpha+1}=(\alpha+1)\varphi+(1-x)\varphi'=1.$$
Therefore,
$$D(\varphi)=\frac{\varphi(x)-x}{\varphi^2(x)}(1-x)^{\alpha-1}.$$
On the other hand,
$$h'=h'(x)=M(x)(1-x)^\alpha,$$
and
$$h''=h''(x)=[(1-x)M'(x)-\alpha M(x)](1-x)^{\alpha-1}.$$
It follows from simple calculations that
$$D(h)=\frac{hh'+xhh''-x(h')^2}{h^2}
=\frac{(1-x)^{\alpha-1}M}{h^2}\left[hB-CM\right].$$
Therefore,
\begin{eqnarray*}
\Delta(x)&=&\frac{(1-x)^{\alpha-1}M}{h^2}(hB-CM)-(1-x)^{\alpha-1}A\\
&\sim&M(hB-CM)-Ah^2\\
&=&-A h^2+MBh-CM^2.
\end{eqnarray*}
The function $\Delta(x)$ is continuous on $[0,1)$, so we just need to show that 
$\Delta(x)\ge0$ for $x$ in the open interval $(0,1)$.

For $x\in(0,1)$ we have $A>0$ and $M>0$, so
\begin{eqnarray*}
\Delta(x)\ge0&\Longleftrightarrow&Ah^2+CM^2\le hBM\\
&\Longleftrightarrow&\frac{h^2}{M^2}+\frac CA\le\frac{hB}{MA}\\
&\Longleftrightarrow&\frac{h^2}{M^2}-\frac{hB}{MA}+\frac{B^2}{4A^2}\le\frac{B^2}{4A^2}
-\frac CA\\
&\Longleftrightarrow&\left(\frac hM-\frac B{2A}\right)^2\le\frac{B^2-4AC}{4A^2}.
\end{eqnarray*}
Recall from Lemma~\ref{5} and the remark preceding it that $A>0$ and $B^2-4AC\ge0$. 
Thus the proof of Theorem~\ref{1} will be completed if we can show that
\begin{equation}
-\frac{\sqrt{B^2-4AC}}{2A}\leq \frac{h}{M}-\frac{B}{2A}\leq\frac{\sqrt{B^2-4AC}}{2A}.
\label{eq4}
\end{equation}

Since the function $M$ is positive and increasing, we have
$$B(x)\ge1-x-\alpha x\ge0,\quad h(x)\leq\int_0^xM(x)(1-t)^\alpha dt=M(x)\varphi(x).$$
It follows from this, the proof of Lemma~\ref{5}, part (b) of Lemma~\ref{4}, and the
triangle inequality that
\begin{eqnarray*}\frac{B+\sqrt{B^2-4AC}}{2A}&\ge&\frac{(1-x-\alpha x)+\left|1-x-
\alpha x-\frac{2(\varphi-x)}{\varphi}\right|}{2A}\\
&\ge&\frac{\frac{2(\varphi-x)}{\varphi}}{2A}=\varphi\ge\frac{h}{M}.
\end{eqnarray*} 
This proves the right half of (\ref{eq4}).

To prove the left half of (\ref{eq4}), we write
$$\delta=\delta(x)=h-M\frac{B-\sqrt{B^2-4AC}}{2A}$$ 
for $x\in(0,1)$ and proceed to show that $\delta(x)$ is always nonnegative.
It follows from the elementary identity
$$\frac{B-\sqrt{B^2-4AC}}{2A}=\frac{2C}{B+\sqrt{B^2-4AC}}$$ 
that $\delta(x)\to0$ as $x\to 0^+$. If we can show that $\delta'(x)\ge0$
for all $x\in(0,1)$, then we will obtain
$$\delta(x)\ge\lim_{t\to0^+}\delta(t)=0,\quad x\in(0,1).$$ 
The rest of the proof is thus devoted to proving the
inequality $\delta'(x)\ge0$ for $x\in(0,1)$.

By direct computation, we have
\begin{eqnarray*}
\delta'(x)&=&M\varphi'-\frac{M'A-MA'}{2A^2}\left[B-\sqrt{B^2-4AC}\right]\\
&-&\frac{M}{2A}\left[B'-\frac{BB'-2(A'C+AC')}{\sqrt{B^2-4AC}}\right].
\end{eqnarray*}
By part (ii) of Lemma~\ref{2} and Hardy's Convexity Theorem, $M$ is 
logarithmically convex. According to part (iii) of Lemma~\ref{2}, the
logarithmic convexity of $M$ is equivalent to
$$\left(x\,\frac{M'}M\right)'=D(M(x))\ge0.$$
It follows that
\begin{eqnarray*}
B'&=&-(\alpha+1)-x\frac{M'}{M}+(1-x)\left(x\frac{M'}{M}\right)'\\
&\ge&-(\alpha+1)-x\frac{M'}{M}\equiv B_0.
\end{eqnarray*} 
Therefore,
\begin{eqnarray*}
\delta'&\ge&M\varphi'-\frac{M'A-MA'}{2A^2}\left[B-\sqrt{B^2-4AC}\right]\\
&&-\ \frac{M}{2A}\left[B_0-\frac{BB_0-2(A'C+AC')}{\sqrt{B^2-4AC}}\right]\\
&\sim&x(1-x)\left[2A^2\varphi'-A\left(\frac{M'}{M}B+B_0\right)+BA'\right]
\sqrt{B^2-4AC}\\
&&+\ x(1-x)\left[AB\left(\frac{M'}{M}B+B_0\right)-A'B^2+2AA'C\right]\\
&&-\ 2x(1-x)A^2\left(2C\frac{M'}{M}+C'\right)\\
&\equiv&d.
\end{eqnarray*}
Here $\sim$ follows from multiplying the expression on its left by the
positive function
$$\frac{2x(1-x)A^2}{M}.$$

We will show that $d\ge0$ for all $x\in(0,1)$. To this end, we are going
to introduce seven auxiliary functions. More specifically, we let
\begin{eqnarray*}
y&=&x(1-x)\frac{M'}M,\\
A_1&=&x(1-x)A'(x)\\
   &=&\frac x{\varphi^3}\left[(\alpha+1)\varphi^2-(2+x+2\alpha x)\varphi+2x\right],\\
B_1&=&x(1-x)\left(\frac{M'}MB+B_0\right)\\
   &=&-(\alpha+1)x(1-x)+(1-2x-\alpha x)y+y^2,\\
C_1&=&x(1-x)\left(2C\frac{M'}M+C'\right)\\
   &=&x(1-(\alpha+1)\varphi)(1-2x-\alpha x+2y),\\
E&=&2A^2C-AB_1+A_1B,\\
F&=&ABB_1-A_1B^2+2AA_1C-2A^2C_1,\\
S&=&\sqrt{B^2-4AC}.
\end{eqnarray*}
Note that the computation for $A_1$ above uses part (a) of Lemma~\ref{4}; the 
computation for $B_1$ uses the definitions of $y$, $B$, and $B_0$; and the 
computation for $C_1$ uses the identities 
\begin{eqnarray*}
C&=&x(1-x)\varphi',\\
C'&=&(1-x)^{\alpha+1}-(\alpha+1)x(1-x)^\alpha\\
&=&(1-x)\varphi'-(\alpha+1)x\varphi',\\
(1-x)\varphi'&=&1-(\alpha+1)\varphi.
\end{eqnarray*}
In terms of these newly introduced functions we can rewrite $d=ES+F$.

It is easy to see that we can write every function appearing in $E$, $F$, and $S$ as 
a function of $(x,y,\varphi)$. In fact, we have
\begin{eqnarray*}
E&=&\frac{x^2}{\varphi^4}(1-(\alpha+1)\varphi)[(\alpha+2)\varphi^2-2(1+x+\alpha
x)\varphi+2x]\\
&+&\frac{1}{\varphi^3}[(3x+2\alpha x-1)\varphi^2-x(1+3x+
3\alpha x\varphi+2x^2]y-\frac{\varphi-x}{\varphi^2}y^2,
\end{eqnarray*}
and
\begin{eqnarray*}
F&=&\frac{x^2}{\varphi^5}(1-(\alpha+1)\varphi)[(\alpha+2)\varphi^2-2(1+x+\alpha
x)\varphi+2x]\\
&&\times\,[(1+x+\alpha x)\varphi-2 x]\\
&+&\frac{1}{\varphi^4}[(1-2x+5x^2-\alpha x+8\alpha x^2+3\alpha^2x^2)\varphi^3\\
&-&x(1+6x+5x^2+5\alpha x+10\alpha x^2+5\alpha^2x^2)\varphi^2\\
&+&4x^2(1+2x+2\alpha x)\varphi-4x^3]y\\
&+&\frac{1}{\varphi^3}[(2-4x-3\alpha x)\varphi^2+4(\alpha+1)x^2\varphi-2x^2]y^2
+\frac{\varphi-x}{\varphi^2}y^3.
\end{eqnarray*}
Note that we have verified the formulas above for $E$ and $F$ with the help of 
Maple. Also, it follows from the proof of Lemma~\ref{5} that
$$S=\sqrt{y^2+2(1-x-\alpha x)y+\frac{1}{\varphi^2}((1+x+\alpha
x)\varphi-2x)^2}.$$
Another tedious calculation with the help of Maple shows that $F^2-E^2S^2$ 
is equal to
$$\frac{4yx^2}{\varphi^8}(\varphi-x)^3(1-(\alpha+1)\varphi)[\varphi^2
-(1+x+\alpha x)\varphi+x](y-y_0),$$
where
$$y_0=\frac{[x(1-x-\alpha x)-(1-x)\varphi][(\alpha+2)\varphi^2-2(1+x
+\alpha x)\varphi+2x]}{(\varphi-x)[\varphi^2-(1+x+\alpha x)\varphi+x]}.$$
This together with Lemma~\ref{4} tells us that
\begin{equation}
F^2-E^2S^2\sim y-y_0.
\label{eq5}
\end{equation}
By Lemma \ref{4} again, we always have $y_0\ge0$.

Recall that $E$, $F$, and $S$ are formally algebraic functions of $(x,y,\varphi)$,
where $x\in(0,1)$, $y\ge0$, and $\varphi>0$. For the remainder of this proof, we 
fix $x$ (hence $\varphi$ as well) and think of $E=E(y)$, $F=F(y)$, and $S=S(y)$ as 
functions of a single variable $y$ on $[0,\infty)$. Thus $E$ is a quadratic 
function of $y$, $F$ is a cubic polynomial of $y$, and $S$ is the square root of a 
quadratic function that is nonnegative for $y\in[0,\infty)$. There are two cases 
for us to consider: $0\le y\le y_0$ and $y>y_0$.

Recall that
$$E(0)=\frac{x^2}{\varphi^4}(1-(\alpha+1)\varphi)[(\alpha+2)\varphi^2-2(1+x+\alpha
x)\varphi+2x].$$
It follows from Lemma~\ref{4} that $E(0)\ge0$. Also, direct 
calculations along with Lemma~\ref{4} show that
\begin{eqnarray*}
E(y_0)&=&\frac{(1-(\alpha+1)\varphi)(\varphi-x)^4[(\alpha+2)\varphi^2-2
(1+x+\alpha x)\varphi+2x]}{\varphi^4[\varphi^2-(1+x+\alpha x)\varphi+x]^2} \\
&\ge&0.
\end{eqnarray*}
Similarly, direct computations along with Lemma~\ref{4} give us
\begin{eqnarray*}
F(y_0)&=&\frac{(1\!-\!(\alpha+1)\varphi)(\varphi-x)^3[(\alpha+2)\varphi^2-2(1+x+
\alpha x)\varphi+2x]}{\varphi^5[\varphi^2\!-\!(1+x+\alpha x)\varphi+x]^3}\\
&&\times\,\left[x[\varphi^2-(1+x+\alpha x)\varphi+x]^2-(1-(\alpha+1)\varphi)
(\varphi-x)^3\right]\\
&\sim&x[\varphi^2-(1+x+\alpha x)\varphi+x]^2-(1-(\alpha+1)\varphi)(\varphi-x)^3.
\end{eqnarray*}
For $x\in(0,1)$ and $\alpha\in[-2,0]$ we have
\begin{eqnarray*}
&[\varphi^2-(1+x+\alpha x)\varphi+x]-(1-(\alpha+1)\varphi)(\varphi-x)\\
&=(\alpha+2)\varphi^2-2(1+x+\alpha x)\varphi+2x>0,
\end{eqnarray*}
and
\begin{eqnarray*}
&x[\varphi^2-(1+x+\alpha x)\varphi+x]-(\varphi-x)^2\\
&=\varphi[x(1-x-\alpha x)-(1-x)\varphi]>0.
\end{eqnarray*} 
It follows that $F(y_0)>0$.

Since $E(y)$ is a quadratic function that is concave downward, it is nonnegative
if and only if $y$ belongs to a certain closed interval. This closed interval
contains $0$ and $y_0$, so it must contain $[0,y_0]$ as well. Therefore, 
$E(y)\ge0$ for $0\le y\le y_0$. It follows from this and (\ref{eq5}) that
$$d\ge E(y)S(y)-|F(y)|\sim E^2(y)S^2(y)-F^2(y)\sim y_0-y\ge0$$
for $0\le y\le y_0$.

In the case when $y>y_0$, we have
$$F^2(y)-E^2(y)S^2(y)\sim y-y_0>0.$$ 
In particular, $F(y)$ is nonvanishing on $(y_0,\infty)$. Since $F(y)$ is
continuous on $[y_0,\infty)$ and $F(y_0)>0$, we conclude that $F(y)>0$ for
all $y>y_0$. Combining this with (\ref{eq5}), we obtain
$$d\ge F(y)-|E(y)|S(y)\sim F^2(y)-E^2(y)S^2(y)\sim y-y_0>0.$$
This shows that $d$ is always nonnegative and completes the proof of Theorem~\ref{1}.

\section{Further results and remarks}

The proof of Theorem~\ref{1} in the previous section actually gives the following 
more general result.

\begin{thm} 
Let $0<p<\infty$ and $-2\le\alpha\le0$. If $M(x)$ is non-decreasing and $\log M(x)$ 
is convex in $\log x$ for $x\in(0,1)$, then the function 
$$x\mapsto \log\frac{\displaystyle\int_0^xM(t)(1-t)^\alpha dt}
{\displaystyle\int_0^x(1-t)^\alpha dt}$$ 
is also convex in $\log x$ for $x\in (0,1)$. 
\label{6}
\end{thm}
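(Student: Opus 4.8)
\textbf{Proof proposal for Theorem~\ref{6}.}

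The plan is to observe that Theorem~\ref{6} is not really a consequence of Theorem~\ref{1} to be proved afresh, but rather a restatement of exactly what the proof of Theorem~\ref{1} established. I would first point out that nowhere in Section~3 did we use the specific form $M(x)=M_p(f,\sqrt{x})$ of the radial integral mean; the only two properties of $M$ that entered the argument were (i) that $M$ is positive and non-decreasing on $(0,1)$, and (ii) that $\log M(x)$ is convex in $\log x$. Property (i) is what guarantees $M'>0$ (after discarding the trivial constant case), which was used to see that $B>0$, that $B^2-4AC>0$ in Lemma~\ref{5}, and that $h(x)\le M(x)\varphi(x)$; property (ii) is precisely $D(M(x))\ge0$, which was invoked to obtain the lower bound $B'\ge B_0$. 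For a general $f\in H(\D)$, Hardy's convexity theorem together with part (ii) of Lemma~\ref{2} supplied exactly (ii), and the strict increase of $M_{p,\alpha}$ supplied (i); but if we simply \emph{hypothesize} (i) and (ii) on an abstract $M$, the same chain of inequalities goes through verbatim.

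Concretely, I would redefine, for the given $M$,
$$h(x)=\int_0^xM(t)(1-t)^\alpha\,dt,\qquad \varphi(x)=\int_0^x(1-t)^\alpha\,dt,$$
and note that by part (iii) of Lemma~\ref{2} and Lemma~\ref{3}, the asserted logarithmic convexity of $h/\varphi$ is equivalent to $\Delta(x)\equiv D(h(x))-D(\varphi(x))\ge0$ on $(0,1)$. Then I would run the Section~3 computation: $D(\varphi)$ is evaluated using Lemma~\ref{4}(a) (which only concerns $\varphi$ and $\alpha$, not $M$); $D(h)$ is computed from $h'=M(1-x)^\alpha$ and $h''=[(1-x)M'-\alpha M](1-x)^{\alpha-1}$; one arrives at $\Delta(x)\sim -Ah^2+MBh-CM^2$ with $A,B,C$ as in Section~2. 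The hypotheses on $M$ give $A>0$, $B>0$, and, via Lemma~\ref{5}, $B^2-4AC>0$, so $\Delta(x)\ge0$ reduces to the two-sided bound \eqref{eq4}. The right half follows from $h\le M\varphi$ and the triangle-inequality estimate exactly as before; the left half follows by showing $\delta(x)=h-M(B-\sqrt{B^2-4AC})/(2A)$ satisfies $\delta(0^+)=0$ and $\delta'(x)\ge0$, and the proof of $\delta'\ge0$ uses the convexity hypothesis on $M$ only through $B'\ge B_0=-(\alpha+1)-xM'/M$, after which the entire $(x,y,\varphi)$ analysis of $d=ES+F$, $F^2-E^2S^2\sim y-y_0$, the sign computations $E(0)\ge0$, $E(y_0)\ge0$, $F(y_0)>0$, and the two-case conclusion ($0\le y\le y_0$ versus $y>y_0$) are literally unchanged, since $y=x(1-x)M'/M\ge0$ is the only way $M$ enters and all the Maple identities are polynomial identities in $x,y,\varphi$ plus inequalities from Lemma~\ref{4}.

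The ``main obstacle'' here is essentially expository rather than mathematical: one must be careful to isolate precisely which steps of Section~3 secretly used something beyond (i) and (ii) — and the honest answer is ``none.'' So the write-up would consist of a short paragraph certifying that the argument of Section~3 used $f$ only through $M$, and through $M$ only via the two stated properties, hence the proof applies verbatim. If one wanted a genuinely self-contained proof I would simply reproduce Section~3 with the phrase ``by Hardy's convexity theorem'' replaced by ``by hypothesis'' at the one place ($B'\ge B_0$) where convexity of $M$ is invoked, and with the remark that $M$ non-decreasing (plus $M$ non-constant) yields $M'>0$. No new estimates on $h$ or $\varphi$ are needed, since Lemma~\ref{4} and Lemma~\ref{5} are already stated for this general setup.
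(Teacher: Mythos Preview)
Your proposal is correct and matches the paper's approach exactly: the paper's ``proof'' of Theorem~\ref{6} is simply the one-line observation that the argument in Section~3 used $f$ only through the function $M$, and $M$ only through the facts that it is non-decreasing and logarithmically convex. Your careful audit of where (i) and (ii) enter (Lemma~\ref{5}, the bound $h\le M\varphi$, and the step $B'\ge B_0$) is precisely the verification that underlies that sentence.
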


The logarithmic convexity of $M_{p,\alpha}(f,r)$ is equivalent to following: 
if $0<r_1<r_2<1$, $0<\theta<1$, and $r=r_1^\theta r_2^{1-\theta}$, then
$$M_{p,\alpha}(f,r)\le \big(M_{p,\alpha}(f,r_1)\big)^\theta 
\big(M_{p,\alpha}(f,r_2)\big)^{1-\theta}.$$
Furthermore, equality occurs if and only if $\log M_{p,\alpha}(f,r)=a\log r+b$ 
for some constants $a$ and $b$, which appears to happen only in very special
situations. For example, if $\alpha=0$, then it appears that $M_{p,0}(f,r)=
ce^{ar}$ (where $c$ and $a$ are constants) only when $f$ is a monomial.

Finally we mention that for $\alpha<-2$ and $y<y_0$, we have
$$\lim_{x\to1}[(\alpha+2)\varphi^2-2(1+x+\alpha x)\varphi+2x]=-\infty.$$
Thus $E(y)<0$ for $x$ close enough to $1$. This implies that $ES+F<0$ for $x$ close 
enough to $1$, so $d$ (and $\delta'$) is not necessarily positive for all $x\in[0,1)$. 
Thus the proof of Theorem \ref{1} breaks down here in the case $\alpha<-2$. However, 
$\delta$ can still be positive. It is just that our approach does not work any more.

\end{document}